\theoremstyle{plain}
\newtheorem{theo}{Theorem}[section]
\newtheorem*{theo*}{Theorem}
\newtheorem{prop}{Proposition}[section]
\newtheorem*{lem*}{Lemma}
\theoremstyle{definition}
\theoremstyle{remark}
\newtheorem*{ack}{Acknowledgements}
\newtheorem*{rem*}{Remark}
\newtheorem{rem}{Remark}[section]
\newtheorem{claim}{Claim}[section]
\newcommand{\R}{\mathbb{R}}
\begin{document}

\bibliographystyle{unsrt}

\abstract Based on a conceptual link between the partial reduction procedure of the reduction of the rotational symmetry of the N-body problem with the symplectic cross-section theorem of Guillemin-Sternberg, we present alternative proofs of the symplecticity of Delaunay and Deprit coordinates in celestial mechanics.
\endabstract

\title{Partial Reduction and Delaunay/Deprit Variables}

\author{Lei Zhao}
\address{Johann Bernoulli Institute for Mathematics and Computer Science, University of Groningen, The Netherlands}
\email{l.zhao@rug.nl}

%\thanks{Supported by }

\date\today
\maketitle

\tableofcontents

\section{Introduction}
In this article, we aim to present a conceptual link between the idea of the \emph{partial reduction} procedure \cite{Malige} in the reduction of the SO(3)-symmetry of the three-body and $N$-body problems (whose phase spaces, after reduction by the translation symmetries, are denoted indifferently by $\Pi$) with the symplectic cross-section theorem of Guillemin-Sternberg, and present its role in the deduction of several important Darboux coordinates, the Delaunay and the Deprit coordinates of celestial mechanics.

Following Jacobi, we know that the (full) reduction of the SO(3)-symmetry can be achieved, by example, by fixing the total angular momentum $\vec{C}$ of the system and rule out the SO(2)-symmetry of rotations around the direction of $\vec{C}$. The method of partial reduction proposed in \cite{Malige} is to only fixing the direction of the angular momentum. The resulting submanifold of the phase space is symplectic, and the restriction of the SO(3)-symmetry in this submanifold becomes the symmetry of SO(2), a maximal torus of SO(3). We see that the action of the maximal torus SO(2) has a non-trivial dynamical effect (a periodic orbit in the $SO(3)$-reduced system are in general only quasi-periodic in the  system with the $SO(3)$-symmetry, with an addition frequency corresponds to the action of the maximal torus), while the rotation of the direction of the total angular momentum only send orbits to other orbits do not interfere the essence of the dynamics. We remark that more generally, this procedure can be achieved for a Hamiltonian action of an arbitrary compact connected Lie group $Gr$ on a symplectic manifold $(M, \omega)$ with moment map $\mu$. In this general context, the partial reduction procedure is achieved by fixing a Cartan subalgebra $\mathfrak{h}^{*}$ in $\mathfrak{g}^{*}$ (where $\mathfrak{g}$ denotes the Lie algebra of $Gr$), fixing a Weyl chamber $\mathfrak{t}_{+}^{*}$ in $\mathfrak{h}^{*}$ and consider the set $\mu^{-1} (\mathfrak{t}_{+}^{*})$. A theorem of Guillemin and Sternberg states that this set is a symplectic manifold. The restriction of $G_r$-action to $\mu^{-1} (W_{+})$ (which is called a symplectic cross-section of the $G_r$ action) is thus the Hamiltonian action of one of its maximal torus. (The book \cite{FultonHarris} provides a nice presentation of all the involved notions in the theory of Lie groups and Lie algebra.) Being abelian, a torus symmetric group is in general much easier to handle. 

With the help of this construction, we shall deal with some concrete problem of determining action-angle coordinates for $N-1$ uncoupled Keplerian ellipses. A generic SO(3)-coadjoint orbit is homeomorphic to $S^{2}$, which only admits one invariant symplectic form up to multiplication of a constant. By determining this constant in concrete circumstances, we can thus recover the symplectic form on $\Pi$ from its restriction to the symplectic cross section and the Kirillov-Konstant symplectic form of the coadjoint orbits. In such a way, we obtain alternative proofs of the symplecticity of the important Delaunay and Deprit coordinates in celestial mechanics, avoiding the use of Hamilton-Jacobi methods.

We organize this article as the following: In Section \ref{Sec: 2}, we recall the Hamiltonian formulation of the three-body problem and the reduction of the translation-invariance using the Jacobi coordinates. In Section \ref{Sec 3}, we recall the reduction of the rotation-invariance of Jacobi and Deprit.  In Section \ref{Partial Reduction}, we indicate the link of these reduction procedures with the symplectic cross-section theorem of Guillemin-Sternberg. In Section \ref{Sec 5} we prove a theorem on the form of the complementary part of the symplectic form, which is then applied in Section {Symplecticity of Delaunay and Deprit coordinates} to (re-)establish the symplecticity of the Delaunay and Deprit coordinates.

\section{The Three-body Problem and the Jacobi Decomposition}\label{Sec: 2}

The three-body problem is a Hamiltonian system with phase space
$$\left\{(p_{j}, q_{j})_{j=0,1,2}=(p_{j}^{1}, p_{j}^{2}, p_{j}^{3}, q_{j}^{1}, q_{j}^{2}, q_{j}^{3}) \in (\R^{3} \times \R^3)^3 |\,  \forall 0 \leq j \neq k \leq 2, q_j \neq q_k \right\}, $$
(standard) symplectic form 
$$\omega_{0}=\sum^{2}_{j=0} \sum^{3}_{l=1} d p_j^l \wedge d q_j^l,$$
and the Hamiltonian function
$$F=\dfrac{1}{2} \sum_{0 \le j \le 2} \dfrac{\|p_j\|^2}{m_j} -  \sum_{0 \le j < k \le 2} \dfrac{m_j m_k}{\|q_j- q_k\|},$$
in which $q_0,q_1,q_2$ denote the positions of the three particles, and $p_0,p_1,p_2$ denote their conjugate momenta respectively. The physical space $\R^{3}$ is equipped with the usual Euclidean norm $\|\cdot\|$. The gravitational constant has been set to $1$.

The Hamiltonian $F$ is invariant under translations in positions. To symplectically reduce the system by this symmetry, we may switch to the \emph{Jacobi (baricentric) coordinates} $(P_i, Q_i),i=0, 1, 2,$ with
\begin{equation*} 
\left\{
\begin{array}{l} P_0=p_0+p_1+ p_2 \\ P_1=p_1+ \sigma_1 p_2\\ P_2 = p_2
\end{array} \right.
\hbox{ \phantom{aaaaaaaqqqqaa}}
\left\{
\begin{array}{l} Q_0=q_0 \\ Q_1=q_1- q_0 \\ Q_2=q_2-\sigma_0 q_0-\sigma_1 q_1,
\end{array} \right.
\end{equation*}
in which
$$\dfrac{1}{\sigma_0}=1+\dfrac{m_1}{m_0},  \dfrac{1}{\sigma_1}=1+\dfrac{m_0}{m_1}.$$
The Hamiltonian $F$ is thus independent of $Q_{0}$ due to the symmetry. We fix $P_{0}=0$ and reduce the translation symmetry by eliminating $Q_{0}$. 
In the (reduced) coordinates $(P_i, Q_i),i=1, 2$, the function $F=F(P_{1}, Q_{1}, P_{2}, Q_{2})$ describes the motions of two fictitious particles.

In the same fashion (c.f. \cite[n.385]{Wintner}), we may reduce the translation symmetry of the N-body problem, and to study the (reduced) dynamics of $N-1$ fictitious particles.

\section{Reductions: from Jacobi to Deprit}\label{Sec 3}
The group SO(3) acts on $\Pi$, the reduced phase space of the three-body problem by the translation symmetry, by simultaneously rotating the two relative positions $Q_{1}, Q_{2}$ and the two relative momenta $P_{1}, P_{2}$. This action is Hamiltonian under the standard symplectic form on $\Pi$, the Hamiltonian $F$ is invariant under this SO(3)-action, and its moment map is the total angular momentum\footnote{We have identified $\mathfrak{so}^{*}(3)$, the space of $3 \times 3$ anti-symmetric matrices, with $\R^{3}$ in the standard way.} $\vec{C}=\vec{C_1}+\vec{C_2}$, in which $\vec{C}_{1}:=Q_1 \times P_1$ and $\vec{C}_{2}:=Q_2 \times P_2$\label{Not: Ang momenta}. The reduction procedure can then be achieved by fixing the moment map $\vec{C}$ (equivalently, the direction of $\vec{C}$ and $C=|\vec{C}|$) to a regular value (i.e. $\vec{C} \neq \vec{0}$) and then reducing the system from the SO(2)-symmetry around $\vec{C}$. As SO(3) also acts on the space of (oriented) directions of $\vec{C}$, the reduced system one obtains must be independent of the direction of $\vec{C}$, and therefore has 4 degrees of freedom.

The plane perpendicular to the total angular momentum $\vec{C}$ is invariant. It is called the \emph{Laplace plane}. Choosing the Laplace plane as the reference plane\footnote{i.e. the horizontal plane.} (i.e. fix $\vec{C}$ vertical) shall give us a very convenient way of calculating the reduced Hamiltonian, as was obtained by Jacobi. Nevertheless, we can also fix $\vec{C}$ non-orthogonal to the reference plane. In this case, the Deprit coordinates shall provide us an explicit reduction procedure.

\subsection{Jacobi's elimination of the nodes of the three-body problem}

As the angular momenta $\vec{C_1}$, $\vec{C_2}$ of the two Keplerian motions and the total angular momentum $\vec{C}=\vec{C_1}+\vec{C_2}$ must lie in the same plane, the node lines of the Laplace plane with the orbital planes of the two ellipses must coincide. 

We now describe the two Keplerian motions in Delaunay variables. Let $a_1,a_2$ be the semi major axes of the inner and outer ellipses respectively. 

The Delaunay coordinates 
$$(L_i,l_i,G_i,g_i,H_i,h_i),i=1,2$$ for both ellipses are thus defined as:

\begin{equation*} 
\left\{
\begin{array}{ll}L_i=\mu_i \sqrt{M_i} \sqrt{a_i}   & \hbox{circular angular momentum}\\ l_i  &\hbox{mean anomaly}\\ G_i = L_i \sqrt{1-e_i^2} &\hbox{angular momentum} \\g_i &\hbox{argument of pericentre} \\ H_i=G_i \cos i_i &\hbox{vertical component of the angular momentum} \\ h_i &\hbox{ longitude of the ascending node},
\end{array}\right.
\end{equation*}

\begin{figure}
\centering
\includegraphics[width=3in]{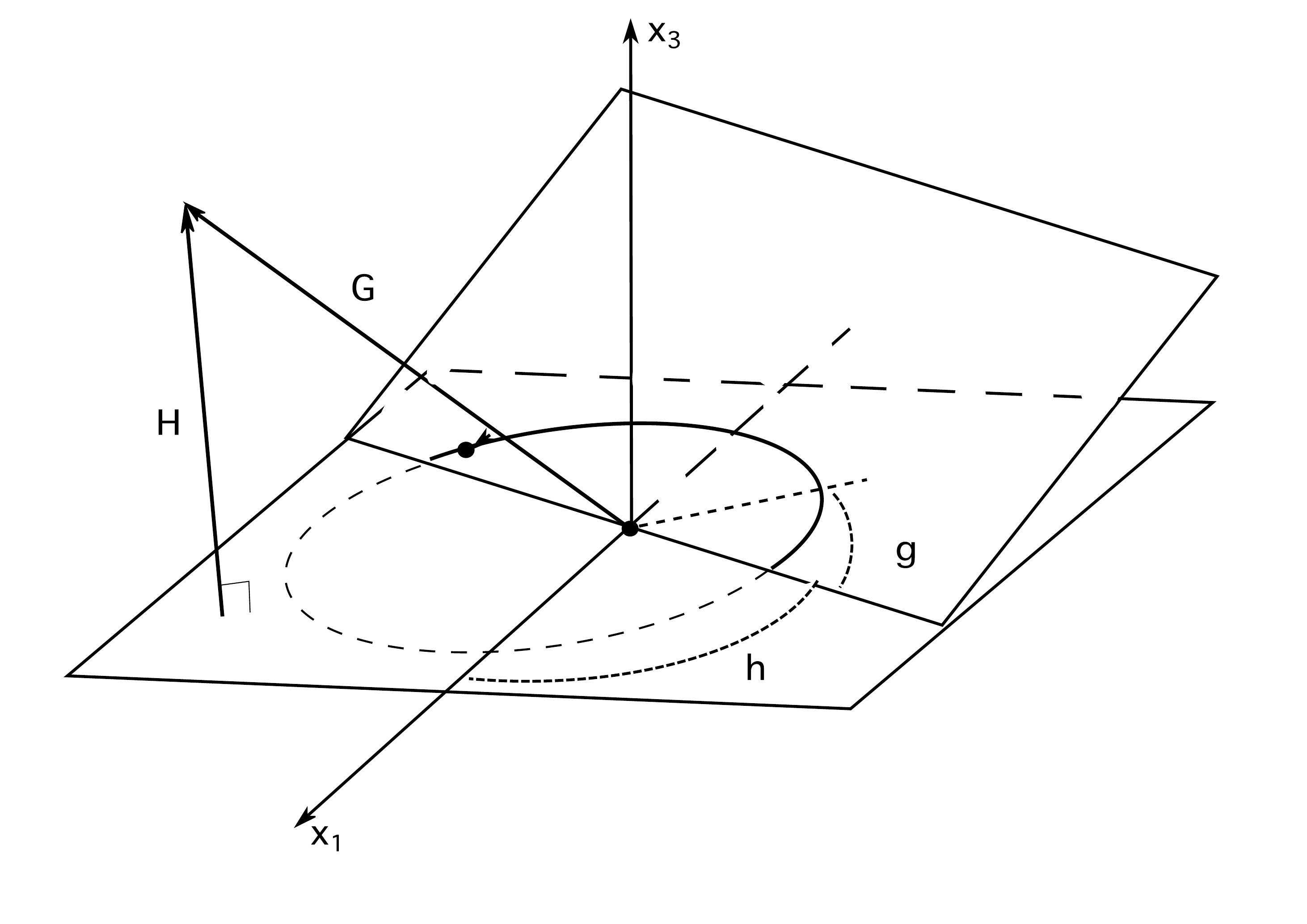}
\caption{Some Delaunay Variables}
\end{figure}
in which $e_{1}$, $e_{2}$ are the eccentricities and $i_1, i_2$ are the inclinations of the two ellipses respectively. We shall write $(L, l, G, g, H, h)$ to denote the Delaunay coordinates for a body moving on an general Keplerian elliptic orbit. From their definitions, we see that these coordinates are well-defined only when neither of the ellipses is circular, horizontal or rectilinear. We refer to \cite{Lecons}, \cite{DelaunayPoincare} or \cite[appendix A]{FejozHabilitation} for more detailed discussions of Delaunay coordinates.

By choosing the Laplace plane as the reference plane, we can express $H_1,H_2$ as functions of $G_1$, $G_2$ and $C:=\|\vec{C}\|$ as:
$$H_{1}=\dfrac{C^{2}+G_{1}^{2}-G_{2}^{2}}{2 C}, H_{2}=\dfrac{C^{2}+G_{2}^{2}-G_{1}^{2}}{2 C}.$$
Since $\vec{C}$ is vertical, we have $d H_{1} \wedge d h_{1}+ d H_{2} \wedge d h_{2}=d C \wedge d h_{1}$.
We can then reduce the system by the $\hbox{SO}(2)$-symmetry around the direction of $\vec{C}$. The degrees of freedom of the system is then reduced from 6 to 4.
  
This reduction procedure was first carried out by Jacobi and is thus called ``Jacobi's elimination of the nodes''. 

Denote by $\Pi'_{vert}$ \label{Not: Pi'_{vert}} the subspace of $\Pi$ one gets by posing $C \neq 0$ and fix the direction of $\vec{C}$ to the vertical direction $(0,0,1)$. The space $\Pi'_{vert}$ is an invariant symplectic submanifold of $\Pi$. Jacobi's elimination of node implies that the coordinates 
$$(L_{1}, l_{1}, G_{1}, g_{1}, L_{2}, l_{2}, G_{2}, g_{2}, C, h_{1})$$
are Darboux coordinates on a dense open set\footnote{on which all the variables are well-defined, i.e. the ellipse they describe are non-degenerate, non-circular, non-horizontal.} of $\Pi'_{vert}$. 

\subsection{Reduction of the three-body problem in the Deprit variables}\label{Subsection: Deprit 3BP}
Let us consider an invariant submanifold $\Pi'$ \label{Not: Pi'} of $\Pi$ by properly fixing the direction of $\vec{C} \neq 0$. The dense open set of $\Pi$ with non-vanishing $\vec{C}$ is thus the union of such invariant symplectic manifolds, and any two of them can be transformed between them by a rotation. In $\Pi'$, the SO(3)-symmetry of the system $F$ is restricted to a (Hamiltonian) SO(2)-symmetry, and is easier to handle. As the standard symplectic form on $\Pi$ is invariant under the SO(3)-action, $\Pi'$ is also an invariant symplectic submanifold of $\Pi$. We can now choose restrict the dynamical study of $F$ to $\Pi'$. Following \cite{Malige}, this restriction procedure is called \emph{partial reduction}. 

For $\vec{C}$ non-vertical, the reduction procedure is conveniently understood in the \emph{Deprit coordinates}\footnote{The terminology follows from \cite{ChierchiaPinzari}.} 
$$(L_1,l_1,L_2,l_2,G_1,\bar{g}_1,G_2,\bar{g}_2,\Phi_1,\varphi_1,\Phi_2,\varphi_2),$$\label{Not: Deprit coordinates}defined as follows: Let   $\nu_L$ be the intersection line of the two orbital planes\footnote{This is the common node line of the two planes in the Laplace plane.}, $\nu_{T}$ be the intersection of the Laplace plane with the horizontal reference plane. {We orient $\nu_{L}$ by the ascending node of the inner ellipse, and choose any orientation for  $\nu_{T}$.} Let
\begin{itemize}
  \item $\bar{g}_1,\bar{g}_2$ denote the angles from $\nu_{L}$\footnote{ A conventional choice of orientation of the node line, is given by their ascending nodes, which leads to opposite orientations of $\nu_{L}$ in the definition of $\bar{g}_{1}$ and $\bar{g}_{2}$. } to the pericentres;
  \item $\varphi_1$ denotes the angle from $\nu_{T}$\label{Not: node lines} to $\nu_L$;
  \item $\varphi_2$ denotes the angle from the first coordinate axis in the reference plane to $\nu_T$;
  \item $\Phi_1=C=\|\vec{C}\|$,  $\Phi_2=C_z=\hbox{the vertical component of }\vec{C}$\label{Not: total ang momentum and vertical component}.
\end{itemize}

\begin{figure}
\centering
\includegraphics[width=4in]{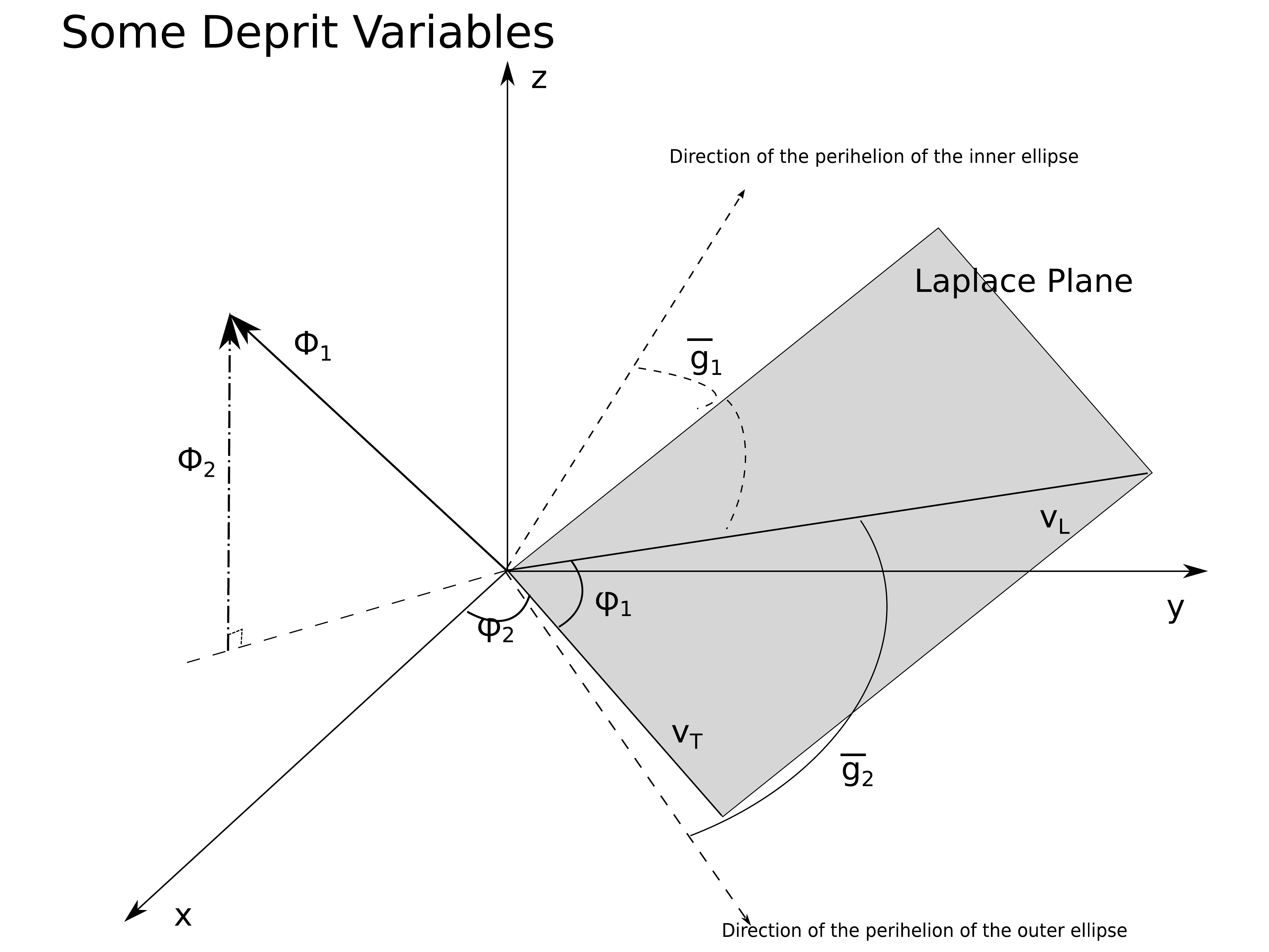}
\caption{Some Deprit Variables} \label{DepritCoordinates}
\end{figure}

\begin{prop}(Chierchia-Pinzari \cite{ChierchiaPinzari})
Deprit coordinates are Darboux coordinates. In the open dense subset of $\Pi$ where all the Deprit variables are well-defined, we have:
\small
\begin{equation*}
\begin{split}
\omega_0 =d L_1 \wedge d l_1 + d G_1 \wedge d  \bar{g}_1 + d L_2  \wedge d l_2 +  d G_2 \wedge d \bar{g}_2 +  d \Phi_1 \wedge d \phi_1 +  d \Phi_2 \wedge d \phi_2.
\end{split}
\end{equation*}
\normalsize
\end{prop}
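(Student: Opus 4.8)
The plan is to deduce the identity from Jacobi's elimination of the nodes (which holds on $\Pi'_{vert}$, where $\vec C$ is vertical) by first establishing it on a symplectic cross-section with $\vec C$ tilted, and then spreading the result over the dense open set $U\subset\Pi$ where $\vec C\neq 0$ by using that $\omega_0$ is $\mathrm{SO}(3)$-invariant. Write $\Omega$ for the $2$-form on the right-hand side of the Proposition. Both $\omega_0$ and $\Omega$ are closed, so it suffices to show $\omega_0=\Omega$ pointwise on $U$ and then conclude on all of $\Pi$ (more precisely, on the open set where the Deprit variables are defined) by density and continuity.

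\emph{Step 1: the statement on a cross-section.} Fix a non-vertical direction $u=Re_3$ with $R\in\mathrm{SO}(3)$ a rotation about the second coordinate axis through an angle $\alpha$, and set $\Pi'_u=\{\vec C\neq 0,\ \vec C\parallel u\}$, a symplectic cross-section of the $\mathrm{SO}(3)$-action. Since $\mathrm{SO}(3)$ acts on $\Pi$ by symplectomorphisms, $R\colon\Pi'_{vert}\to\Pi'_u$ is a symplectomorphism, and I would transport Jacobi's identity $\omega_0|_{\Pi'_{vert}}=\sum_{i}\bigl(dL_i\wedge dl_i+dG_i\wedge dg_i\bigr)+dC\wedge dh_1$ through it while tracking the coordinates: $L_i,l_i,G_i$ are rotation-invariant and so are unchanged; the common node line of the two orbital planes is carried to the Deprit line $\nu_L$ (with the two opposite orientations for $i=1,2$ recorded in the footnote), so the arguments of pericentre $g_i$ become $\bar g_i$; $C$ becomes $\Phi_1$; and, expressing the Laplace plane in the frame $\{Re_1,Re_2\}$, one checks that the longitude of the node $h_1$ becomes, up to an additive right angle, the angle $\varphi_1$ from $\nu_T$ to $\nu_L$. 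Finally, on $\Pi'_u$ one has $\Phi_2=C_z=C\cos\alpha$ and $\varphi_2$ constant, so $d\Phi_2\wedge d\varphi_2$ vanishes identically there. Pushing Jacobi's identity forward therefore yields $\omega_0|_{\Pi'_u}=\Omega|_{\Pi'_u}$.

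\emph{Step 2: spreading over $U$.} At a point $p\in\Pi'_u$ the subspaces $T_p\Pi'_u$ and $\mathfrak{so}(3)\cdot p$ span $T_pU$ (they meet only along the line generated by the rotation about $\vec C$, and $10+3-1=12$). Hence, once one knows in addition that $\omega_0$ and $\Omega$ contract to the same $1$-form against every fundamental vector field $X_\xi$, $\xi\in\mathfrak{so}(3)$, the difference $\Omega-\omega_0$ annihilates every pair of tangent vectors at $p$ --- pairs from $T_p\Pi'_u$ by Step 1, and any pair meeting $\mathfrak{so}(3)\cdot p$ by the contraction identity --- so that $\omega_0=\Omega$ on $U$. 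For $\omega_0$ the contraction $\iota_{X_\xi}\omega_0$ equals $d\langle\vec C,\xi\rangle$, $\vec C$ being the moment map. The corresponding statement for $\Omega$ is where the partial-reduction picture of Guillemin-Sternberg is used: the complementary block $d\Phi_2\wedge d\varphi_2$ of $\Omega$ should be the Kirillov-Kostant form of the coadjoint orbit through $\vec C$, which is a round $2$-sphere carrying a unique invariant symplectic form up to the scalar $C=\|\vec C\|$; this matches the fact that the rotation about the vertical axis is generated by $C_z=\Phi_2$ and acts on the Deprit coordinates simply by translating $\varphi_2$. Verifying $\iota_{X_\xi}\Omega=d\langle\vec C,\xi\rangle$ for a basis of $\mathfrak{so}(3)$ --- equivalently, working out the $\mathrm{SO}(3)$-action on the Deprit coordinates and fixing the constant in this complementary part --- is the main obstacle, and it is exactly what the theorem of Section~\ref{Sec 5} on the form of the complementary part of the symplectic form is meant to provide.
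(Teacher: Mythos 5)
Your overall route coincides with the paper's: establish the identity on a symplectic cross-section by transporting Jacobi's elimination of the nodes, then extend over the dense set $\{\vec C\neq 0\}$ using the $\mathrm{SO}(3)$-invariance of $\omega_0$ and the splitting of the tangent space into cross-section directions and orbit directions. Step 1 and the linear-algebra reduction at the start of Step 2 are sound. But there is a genuine gap exactly where you flag ``the main obstacle'': you never verify $\iota_{X_\xi}\Omega=d\langle\vec C,\xi\rangle$, i.e.\ you never fix the normalization of the complementary block, and you cannot discharge this by appealing to Theorem~\ref{Theo: Generalized Rotation Lemma}. That theorem only asserts the \emph{existence} of an orbit-dependent constant $D=D(C)$ with $\omega_0=\Omega_{\mathrm{cross}}+D\,dC_z\wedge d\varphi_2$ (this is all that the uniqueness, up to scale, of the invariant form on the coadjoint $S^2$ can give); it says nothing about $D$ equaling $1$. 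Note moreover that $d\Phi_2\wedge d\varphi_2$ restricts to zero on every cross-section $\Pi'_u$ (there $\varphi_2$ is constant), so your Step 1 carries no information whatsoever about $D$: as written, your argument proves $\omega_0=\Omega_{\mathrm{cross}}+D\,d\Phi_2\wedge d\varphi_2$ for an unknown $D(C)$ and stops.

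The paper closes this gap with a separate, concrete comparison: it restricts the form $\Omega_{\mathrm{cross}}+D\,dC_z\wedge d\phi_2$ to the symplectic submanifold of \emph{identical} Keplerian motions, on which $L_1=L_2$, $l_1=l_2$, $G_1=G_2=C/2$, $\bar g_i+\phi_i=g_i$, $H_1=H_2=C_z/2$ and $\phi_2=h_1=h_2$, and compares with the restriction of the decoupled spatial Delaunay form $\sum_i\left(dL_i\wedge dl_i+dG_i\wedge dg_i+dH_i\wedge dh_i\right)$, whose symplecticity is established beforehand; matching the coefficient of $dC_z\wedge d\phi_2$ forces $D=1$. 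Some such comparison, on a subspace where both coordinate systems are simultaneously legible, is unavoidable. If you insist on your formulation via the contraction identity instead, you would have to work out explicitly how a full basis of $\mathfrak{so}(3)$ acts on $(\Phi_2,\varphi_2)$ and the remaining Deprit angles, which is essentially the direct Chierchia--Pinzari computation that this approach is designed to avoid.
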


The variables $(L_1,l_1,L_2,l_2,G_1,\bar{g}_1,G_2,\bar{g}_2,\Phi_1,\varphi_1)$ form a set of Darboux coordinates on a dense open set (on which all the variables are well-defined) of $\Pi'$, any of the subspaces of $\Pi$ one gets by fixing the direction of $\vec{C}$ non-vertical. In these coordinates, the Hamiltonian can be written in closed form in the ``planar variables'' $(L_1,l_1,G_1,\bar{g}_1,L_2,l_2,G_2,\bar{g}_2)$ and $C$. We can then fix $C$ and reduce the system from the SO(2)-symmetry around the direction of $\vec{C}$ to complete the reduction procedure.

In \cite{Deprit}, Deprit established a set of coordinates closely related to the set of coordinates presented above. The actual form of our Deprit coordinates was independently discovered and first presented by Chierchia and Pinzari in \cite{ChierchiaPinzari}. Note that in both of these references, Deprit coordinates are built for the general N-body problem, with the aim to generalize Jacobi's elimination of nodes, or to conveniently reduce the SO(3)-symmetry of the $N$-body problem for $N \ge 4$, which is of significant importance for the perturbative study of the $N$-body problem (c.f.\cite{ChierchiaPinzariPlanetary}). 

\begin{rem} In $\Pi'_{vert}$, we have $\bar{g}_1=g_1$, $\bar{g}_2=g_2$ and $\Phi_1=\Phi_2$. The angles $\phi_1$, $\phi_2$ are not defined individually. Nevertheless, their sum $\phi_1+ \phi_2$ remains well defined. One can then recover Jacobi's elimination of the node from the Deprit variables by a limit procedure, see \cite{ChierchiaPinzari} for details. 
\end{rem}

\subsection{Deprit coordinates for $N$-body problem}
Let us present the Deprit coordinates in $N$-body problem, or for $N-1$ Keplerian ellipses\footnote{The reader understands that more precisely this means Keplerian elliptic motions.}, by induction on $N$: Divide the $N-1$ Keplerian ellipses into a group of $N-2$ Keplerian ellipses and another group consists of only one Keplerian ellipse (whose elements are written with an subscript $N-1$). Denote the total angular momentum of the $N-2$ Keplerian ellipses in the first group by $\vec{C}_{N-2}$ and the total angular momentum of the whole system by $\vec{C}$. Then the Deprit coordinates for the group of $N-2$ Keplerian ellipses, except the conjugate pair of $C_{N-2, z}$ and its conjugate angle, together with $L_{N-2}, l_{N-2}, G_{N-2}, \bar{g}_{N-2}, C, \phi_{1}, C_{z}, \phi_{2}$ are the Deprit coordinates for the $N-1$ Keplerian ellipses, in which $C_{N-2, z}$ is the projection of $\vec{C}_{N-2}$ to $\vec{C}$, and $\bar{g}_{N-2}$ is the argument of the perihelion from the node line of this Keplerian ellipse with the Laplace plane, \emph{i.e.} the plane orthogonal to $\vec{C}$. More explicit and precise definitions of these variables can be found in \cite{ChierchiaPinzari}.

As mentioned above, the symplecticity of these set of coordinates is proven by Chierchia and Pinzari \cite{ChierchiaPinzari} (Deprit proved the symplecticity of his set of coordinates in \cite{Deprit}). We shall give an alternative proof in Section \ref{Symplecticity of Delaunay and Deprit coordinates}.

\section{A Conceptual View of the Partial Reduction Procedure}\label{Partial Reduction}
Now let us make an remark on the generalization of the idea of partial reduction \cite{Malige} for arbitrary compact connected group $G_r$, which simultaneously gives a conceptual way of understanding this procedure. 

Let $G_r$ be a compact connected Lie group which acts in a Hamiltonian way on a connected symplectic manifold $(M, \omega)$ and let $\mu: M \to \mathfrak{g}^{*}$ be the associated moment map, in which $\mathfrak{g}^{*}$ is the dual of the Lie algebra $\mathfrak{g}$ of $G_r$. Since $G_{r}$ is compact, there exist an invariant inner product on $\mathfrak{g}$, which permits to identify $\mathfrak{g}$ with its dual $\mathfrak{g}^{*}$. For any fixed Cartan subalgebra $\mathfrak{h} \subset \mathfrak{g}$, denote by $\check{T}$ the corresponding Cartan subgroup (i.e. a maximal torus) in $G_r$. Let us choose a (positive) Weyl chamber $\mathfrak{t}_{+}^{*}$ in $\mathfrak{h}^{*} \subset \mathfrak{g}^{*}$. It turns out that the pre-image $\mu^{-1} (\mathfrak{t}_{+}^{*})$ is a ``symplectic cross-section'' (in the words of \cite{GuilleminSternberg}) of the $G_r$ action on $(M, \omega)$:

\begin{theo}\label{Guillemin-Sternberg}(Guillemin-Sternberg \cite{GuilleminSternberg}) The pre-image $\mu^{-1}(\mathfrak{t}_{+}^{*})$ of the positive Weyl chamber is a $\check{T}$-invariant symplectic submanifold of $(M, \omega)$. The restriction of the $G_{r}$ action on $\mu^{-1}(\mathfrak{t}_{+}^{*})$ is a Hamiltonian torus action of $\check{T}$. For any closed subgroup $\check{T}' \subset \check{T}$, the subset of $\mu^{-1}(\mathfrak{t}_{+}^{*})$ containing points fixed by $\check{T}'$ is a $\check{T}$ symplectic submanifold of $\mu^{-1}(W_{+})$.
\end{theo}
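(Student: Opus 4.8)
The plan is to reduce every assertion to linear algebra in $T_xM$ at an arbitrary point $x\in\mu^{-1}(\mathfrak t_+^*)$, interpreting $\mathfrak t_+^*$ as the \emph{open} Weyl chamber so that $\xi:=\mu(x)$ is a regular element and its coadjoint stabiliser $\mathfrak g_\xi$ equals $\mathfrak h$. I will lean on two standard inputs: the bifurcation lemma, i.e.\ $\operatorname{im}(d\mu_x)=\mathfrak g_x^{\circ}$ and dually $\ker(d\mu_x)=(\mathfrak g\cdot x)^{\omega}$ (with $\mathfrak g\cdot x:=T_x(G_r\cdot x)$, and $\circ$, $\omega$ denoting the annihilator and the symplectic orthogonal); and equivariance of $\mu$, which forces $\mathfrak g_x\subseteq\mathfrak g_\xi=\mathfrak h$. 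Using the fixed invariant inner product I will split $\mathfrak g=\mathfrak h\oplus\mathfrak h^{\perp}$, hence $\mathfrak g^*=\mathfrak h^*\oplus\mathfrak h^{\circ}$, where $\mathfrak h^{\circ}=T_\xi\mathcal O_\xi$ is the tangent space to the coadjoint orbit through $\xi$ and $\mathfrak t_+^*$ sits inside $\mathfrak h^*$ as an open cone through $\xi$.

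For the first assertion I would first establish transversality: since $\mathfrak g_x\subseteq\mathfrak h$, one has $\operatorname{im}(d\mu_x)=\mathfrak g_x^{\circ}\supseteq\mathfrak h^{\circ}=T_\xi\mathcal O_\xi$, while $T_\xi\mathfrak t_+^*=\mathfrak h^*$ because $\xi$ is an interior point, so $\operatorname{im}(d\mu_x)+T_\xi\mathfrak t_+^*=\mathfrak g^*$; hence $\mu$ is transverse to $\mathfrak t_+^*$ and $\mu^{-1}(\mathfrak t_+^*)$ is a submanifold of codimension $\dim\mathcal O_\xi$ with $T_x\mu^{-1}(\mathfrak t_+^*)=(d\mu_x)^{-1}(\mathfrak h^*)=:V$. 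Then I would compute the symplectic orthogonal $V^{\omega}$: from $\ker d\mu_x\subseteq V$ one gets $V^{\omega}\subseteq\mathfrak g\cdot x$, and writing a generic element as $X_M(x)$ and using $\omega(X_M(x),v)=\langle d\mu_x(v),X\rangle$ one sees that $X_M(x)\in V^{\omega}$ exactly when $X$ annihilates $\operatorname{im}(d\mu_x)\cap\mathfrak h^*$, i.e.\ when $X\in\mathfrak g_x+\mathfrak h^{\perp}$; hence $V^{\omega}=\mathfrak h^{\perp}\cdot x$. Finally, if $v\in V\cap V^{\omega}$ then $d\mu_x(v)\in\mathfrak h^*\cap\mathfrak h^{\circ}=0$, so $v=X_M(x)$ with $X\in\mathfrak h^{\perp}$ and $\operatorname{ad}^*_X\xi=d\mu_x(v)=0$, forcing $X\in\mathfrak h\cap\mathfrak h^{\perp}=0$; thus $V\cap V^{\omega}=0$, $\omega|_V$ is nondegenerate, and $\mu^{-1}(\mathfrak t_+^*)$ is symplectic.

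For the second assertion, $\check T$-invariance is immediate from equivariance, $\mu(t\cdot x)=\operatorname{Ad}^*_t\mu(x)=\mu(x)$ since $\check T$ acts trivially on $\mathfrak h$ and hence on $\mathfrak h^*\ni\mu(x)$; that $\check T$ is the full subgroup of $G_r$ preserving the cross-section follows from the Weyl group acting simply transitively on the open chambers. Restricting the defining identity $\iota_{X_M}\omega=d\langle\mu,X\rangle$ to $\mu^{-1}(\mathfrak t_+^*)$ for $X\in\mathfrak h$ — legitimate because $X_M$ is tangent to the cross-section — shows the residual $\check T$-action is Hamiltonian with moment map the composite $\mu^{-1}(\mathfrak t_+^*)\hookrightarrow M\xrightarrow{\ \mu\ }\mathfrak g^*\twoheadrightarrow\mathfrak h^*$, which on the cross-section is just $\mu$ itself. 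For the third assertion I would use that a closed, hence compact, subgroup $\check T'\subseteq\check T$ has a fixed-point set that is a submanifold of $\mu^{-1}(\mathfrak t_+^*)$ with tangent space $W_0:=(T_x\mu^{-1}(\mathfrak t_+^*))^{\check T'}$; choosing a $\check T'$-invariant complement $W_1$ inside $T_x\mu^{-1}(\mathfrak t_+^*)$ with no trivial subrepresentation and invoking $\check T'$-invariance of $\omega$, the functional $w_1\mapsto\omega(v_0,w_1)$ on $W_1$ is $\check T'$-invariant for each $v_0\in W_0$, hence vanishes, so $W_0\perp_\omega W_1$ and $\omega|_{W_0}$ is nondegenerate; commutativity of $\check T$ gives the asserted $\check T$-invariance.

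The step I expect to be the main obstacle is the computation of $V^{\omega}$ together with the conclusion $V\cap V^{\omega}=0$: it is precisely here that the symplecticity (as opposed to mere isotropy or coisotropy) of the cross-section lives, and a careless choice of transverse complement in $\mathfrak g^*$ produces the wrong answer. A secondary point to handle with care is the meaning of ``positive Weyl chamber'': for the \emph{open} chamber the argument runs as above, while over a proper face $\sigma$ one has $\mathfrak g_\xi\supsetneq\mathfrak h$ for $\xi\in\sigma$ and the same computation instead shows that $\mu^{-1}(\sigma)$ is symplectic with a residual Hamiltonian action of the larger torus whose Lie algebra is $\mathfrak g_\xi$ — the version actually needed when the angular momentum is allowed to degenerate in the celestial-mechanics applications.
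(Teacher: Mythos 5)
Your proposal is correct, but there is nothing in the paper to compare it against: Theorem \ref{Guillemin-Sternberg} is quoted from Guillemin--Sternberg without proof (the paper only adds the remark that compactness of $M$ is not needed), so your argument supplies what the paper omits. What you wrote is essentially the standard proof of the cross-section theorem: transversality of $\mu$ to the open chamber via the bifurcation lemma, the identification $V^{\omega}=\mathfrak{h}^{\perp}\cdot x$, and the regularity of $\xi$ (so that $\mathfrak{g}_{\xi}=\mathfrak{h}$ and $\mathfrak{h}^{*}\cap\mathfrak{h}^{\circ}=0$) to kill $V\cap V^{\omega}$; the third assertion is the usual fixed-point-set-of-a-compact-symplectic-action argument. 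All the individual steps check out: $\operatorname{im}(d\mu_x)\supseteq\mathfrak{h}^{\circ}=T_{\xi}\mathcal{O}_{\xi}$ because $\mathfrak{g}_x\subseteq\mathfrak{g}_{\xi}=\mathfrak{h}$, the annihilator computation $(\mathfrak{g}_x^{\circ}\cap\mathfrak{h}^{*})^{\circ}=\mathfrak{g}_x+\mathfrak{h}^{\perp}$ is right, and $V\cap V^{\omega}=0$ does suffice for nondegeneracy of $\omega|_V$ since that intersection is exactly the radical. Your closing caveat about the open chamber versus its faces is well placed and is in fact the relevant reading here: in the paper's SO(3) application the chamber is an open ray ($\vec{C}\neq 0$ with fixed direction), so $\xi=\mu(x)$ is regular and your hypothesis $\mathfrak{g}_{\xi}=\mathfrak{h}$ holds throughout; a complete proof of the statement as literally phrased (closed chamber, including walls) would require the face-by-face refinement you sketch at the end.
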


Since $G_r$ is a compact connected Lie group, the Cartan subalgebras in $\mathfrak{g}^{*}$ are conjugate to each other. As $\mu$ interwines the $G_r$ action on $(M, \omega)$ and the coadjoint action of $G_r$ on $\mathfrak{g}^{*}$, any two of these ``symplectic cross-sections'' is the image under the $G_r$-action of each other.

\begin{rem} The original statement also requires $M$ to be compact. Nevertheless, in order only to get the cited statements, the compactness is not necessary.
\end{rem}

In the three-body or N-body problems in $\R^{3}$, the group SO(3) acts in a Hamiltonian way on the (translation-reduced) phase space, whose moment map is just the angular momentum vector $\vec{C} \in \mathfrak{so}(3) \cong \R^{3}$. Any Cartan subalgebra is the vector space of infinitesimal generators of rotations with fixed rotation axis, which is a 1-dimensional vector subspace (homeomorphic to $\R$) in $\R^{3}$. A positive Weyl chamber is therefore a connected component of this 1-dimensional vector subspace minus the origin, formed by infinitesimal generators generating rotations with the same orientation. The pre-image of the positive Weyl chamber is the submanifold one gets by fixing the direction of $\vec{C}$, which is easily seen to be invariant under the Hamiltonian flow of the N-body problem. Theorem \ref{Guillemin-Sternberg} shows that this submanifold is symplectic and the restriction of the SO(3)-action to this submanifold is the SO(2)-action around the fixed direction of $\vec{C}$. This is exactly the ``partial reduction'' procedure described in \cite{Malige}. 

Moreover, as already mentioned in Subsection \ref{Subsection: Deprit 3BP}, Jacobi explicitly establishes a set of action-angle coordinates on $\Pi'$ from the Delaunay coordinates. We state a theorem in the next section, which allows us to easily deduce Deprit's coordinates from those found by Jacobi, construct the Deprit coordinates for more bodies, and prove the symplecticity of these coordinates.

\section{Symplectic Complement of the Symplectic Cross-Sections}\label{Sec 5}

\begin{theo}\label{Theo: Generalized Rotation Lemma} Suppose that a compact connected Lie group $G_r$ acts in a Hamiltonian way on the symplectic manifold $(M, \omega)$ with moment map $\mu: M \to \mathfrak{g}^{*}$. Let us fix a Weyl chamber $\mathfrak{t}_{+}^{*}$ in $\mathfrak{g}^{*}$. Suppose that $\forall x \in \mu^{-1}(\mathfrak{t}_{+}^{*})$, $\mu$ induces an isomorphism between the $\omega$-orthogonal space of $\mu^{-1}(\mathfrak{t}_{+}^{*})$ at $x$ and the tangent space at $\mu(x)$ of a coadjoint orbit in $\mathfrak{g}^{*}$, and suppose that, up to multiplication of a constant, there exists only one $G_r$-invariant symplectic form on the coadjoint orbit, so that there exists only one symplectic form on the normal space of $\mu^{-1}(\mathfrak{t}_{+}^{*})$ at $x$ which can be extended to a $G_r$-invariant form along a $G_r$-orbit. Then there exists a constant $D_{\mu_{0}} \in \R$, such that
$$\omega=\omega_{0} \, + \mu^{*} \tilde{\omega}_{\mu_{0}},$$
where $\omega_{0}$ is the restriction of $\omega$ on $\mu^{-1} (\mathfrak{t}_{+}^{*})$ and $D_{\mu_{0}} \, \mu^{*} \tilde{\omega}_{\mu_{0}}$ are seen as extended to two $G_r$-invariant two-forms, and for $x_{0} \in M$, $\omega_{\mu_{0}}$ is the Kirillov-Konstant symplectic form on the coadjoint orbit passing through $\mu_{0}=\mu(x_{0})$ and $D_{\mu_{0}}$ depends only on the coadjoint orbit of $\mu_{0}$.
 \end{theo}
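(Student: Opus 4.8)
The plan is to turn the statement into a pointwise fact on the symplectic cross-section, globalize it by $G_r$-equivariance, and then invoke the one-dimensionality hypothesis for invariant forms on the coadjoint orbit.

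Set $\Sigma := \mu^{-1}(\mathfrak{t}_{+}^{*})$ and $Y := G_r\cdot\Sigma$, an open $G_r$-invariant subset of $M$ containing every $x_{0}$ with $\mu(x_{0})$ regular; I work on $Y$ (and, in the applications, on the dense open subset where the Delaunay/Deprit-type variables are defined). By Theorem~\ref{Guillemin-Sternberg}, $\Sigma$ is a symplectic submanifold, so at each $x\in\Sigma$ one has the $\omega$-orthogonal splitting $T_{x}M = T_{x}\Sigma\oplus N_{x}$, $N_{x}:=(T_{x}\Sigma)^{\omega}$, with $\omega_{x}|_{T_{x}\Sigma}$ and $\omega_{x}|_{N_{x}}$ both nondegenerate and no mixed terms. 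Because $\Sigma$ is $\check{T}$-invariant and $\omega$ is $G_r$-invariant, both $T\Sigma$ and the normal bundle $N$ (with fibres $N_{x}$) are $\check{T}$-invariant along $\Sigma$, and translating by $G_r$ produces $G_r$-invariant complementary distributions $\mathcal T$, $\mathcal N$ on $Y$ with $TY=\mathcal T\oplus\mathcal N$, an $\omega$-orthogonal splitting; that $\mathcal N_{g\cdot x}:=dg_{x}(N_{x})$ is well defined uses the $\check{T}$-invariance of $N$ and that the $G_r$-stabilizer of a regular $\mu(x)$ is exactly $\check{T}$. Hence $\omega$ decomposes as a sum of two $G_r$-invariant two-forms $\omega=\omega^{\mathcal T}+\omega^{\mathcal N}$, where $\omega^{\mathcal T}$ (resp.~$\omega^{\mathcal N}$) is $\omega$ precomposed with the projection onto $\mathcal T$ (resp.~$\mathcal N$); as $\omega^{\mathcal T}$ restricts to $\omega_{0}=\omega|_{\Sigma}$ on $\Sigma$, it is the claimed $G_r$-invariant extension of $\omega_{0}$.

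To identify $\omega^{\mathcal N}$: at $x\in\Sigma$ the form $\omega_{x}|_{N_{x}}$ is a nondegenerate two-form on $N_{x}$ that, by $G_r$-invariance of $\omega$, extends to a $G_r$-invariant form along $G_r\cdot x$; through the isomorphism $d\mu_{x}\colon N_{x}\to T_{\mu(x)}\mathcal O$ furnished by the hypothesis, the pulled-back Kirillov--Kostant form is a second such form (it extends to the $G_r$-invariant form $\mu^{*}\omega_{\mu(x)}$). The uniqueness hypothesis then forces $\omega_{x}|_{N_{x}} = D(x)\,(d\mu_{x})^{*}\omega_{\mu(x)}$ for some $D(x)\in\R$. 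Equivariance of $\omega$, $\mu$ and the Kirillov--Kostant form makes $D$ constant along each $G_r$-orbit, hence --- every regular coadjoint orbit meeting $\mathfrak{t}_{+}^{*}$ in a single point, the relevant fibres of $\mu$ being $\check{T}$-orbits in the cases at hand --- a function $D_{\mu_{0}}$ of the coadjoint orbit of $\mu_{0}=\mu(x_{0})$ alone. Reading this back through $TY=\mathcal T\oplus\mathcal N$, and noting that $\mu^{*}\omega_{\mu_{0}}$ --- with $\omega_{\mu_{0}}$ extended off its orbit so as to kill the transverse directions $d\mu(\mathcal T)$, which is transparent in coordinates adapted to the orbit foliation --- equals $\omega^{\mathcal N}$ on $\mathcal N$ up to the scalar $D_{\mu_{0}}$ and vanishes on $\mathcal T$, we get $\omega^{\mathcal N}=D_{\mu_{0}}\,\mu^{*}\omega_{\mu_{0}}=:\mu^{*}\tilde\omega_{\mu_{0}}$, so $\omega=\omega_{0}+\mu^{*}\tilde\omega_{\mu_{0}}$ in the asserted sense. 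I expect the real work to be not this last step --- which is one line of the uniqueness hypothesis --- but the geometric bookkeeping of the previous paragraph: checking that the $\omega$-orthogonal normal bundle of $\Sigma$ genuinely extends to a globally defined $G_r$-invariant distribution, and making precise which $G_r$-invariant extensions of $\omega_{0}$ and of the Kirillov--Kostant form are meant so that the identity holds on all of $TY$ and not merely on the normal directions. For $G_r=\mathrm{SO}(3)$ this is all transparent: the coadjoint orbits are the spheres $\|\vec C\|=\text{const}$, whose rotation-invariant area forms form a one-parameter family, so the uniqueness hypothesis is automatic and $D_{\mu_{0}}$ is just the scalar relating the restriction of $\omega$ to the normal directions to the area form of the sphere of radius $\|\vec C\|$ --- the constant that will be pinned down when the Delaunay and Deprit coordinates are treated.
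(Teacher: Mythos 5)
Your proposal is correct and follows essentially the same route as the paper: an $\omega$-orthogonal splitting of $T_{x}M$ along the cross-section, identification of the normal part with a scalar multiple of the pulled-back Kirillov--Kostant form via the uniqueness-up-to-constant hypothesis, and $G_r$-equivariance to make that scalar a function of the coadjoint orbit alone. You merely spell out the globalization and bookkeeping that the paper's terser proof leaves implicit.
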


\begin{proof} We fix a point $x_{0} \in \mu^{-1}(\mathfrak{t}_{+}^{*})$. For any two vectors $v_{1}, v_{2} \in T_{x_{0}} M$, we may decompose them as $v_{1}=u_{1}+w_{1}, v_{2}=u_{2}+w_{2}$, such that $u_{1}, u_{2} \in T_{x_{0}} \mu^{-1} (\mathfrak{t}_{+}^{*})$ and $w_{1}, w_{2} \in (T_{x_{0}} \mathfrak{t}_{+}^{*})^{\perp}$, in which $(T_{x_{0}} \mathfrak{t}_{+}^{*})^{\perp}$ is the orthogonal space of $T_{x}  \mu^{-1} (\mathfrak{t}_{+}^{*})$ with respect to the form $\omega$. The statement is equivalent to 
$$\forall w_{1}, w_{2} \in (T_{x_{0}} \mathfrak{t}_{+}^{*})^{\perp}, \omega_{x_{0}} (w_{1}, w_{2})=D \, \mu^{*} \tilde{\omega}_{\mu_{0}}(w_{1}, w_{2}).$$
Restricted to the $\omega$-orthogonal space of $\mu^{-1}(\mathfrak{t}_{+}^{*})$ at $x_{0}$, both forms $\omega$ and $\mu^{*} \tilde{\omega}_{\mu_{0}} $ are bilinear, anti-symmetric, non-degenerate, and they can be extended to two $G_r$-invariant forms. Therefore after being extended in such ways, they agree up to a $G_r$-invariant factor $D$ (and hence $D$ is constant on the pre-image of a coadjoint orbit). Hence we have 
$$\omega=\omega_{0}+D \, \mu^{*}(\, \tilde{\omega}_{\mu_{0}}),$$
in which $D=D_{\mu_{0}}$ depends only on the coadjoint orbit of $\mu_{0}$. 

%By closeness of $\omega$ and $\omega_{\mathfrak{t}_{+}^{*}}$ and commutativity of pull-back and differential, we have $$d D \wedge \mu^{*} \tilde{\omega}_{\mu_{0}}=0,$$ which implies that $D$ only depends on the coadjoint orbit passing through $\mu_{0}$.
\end{proof}

\section{Symplecticity of Delaunay and Deprit coordinates}\label{Symplecticity of Delaunay and Deprit coordinates}
In the spatial cases, the symmetric group is always SO(3). The SO(3) coadjoint orbits we shall consider are homeomorphic to $S^{2}$, which admit only one SO(3)-invariant symplectic form up to multiplication of constants. The SO(3)-moment map is the total angular momentum $\vec{C}$ and the form $\mu^{*} \tilde{\omega}_{\mu_{0}}$ is seen to be equal to $d C_{z} \wedge d h_{\vec{C}}$ (by passing to symplectic cylindral coordinates). Our main task in this section is to determine the factor $D$ in different contexts.

\subsection{Delaunay coordinates}
\subsubsection{Planar Delaunay coordinates}
We first analyze the planar Delaunay coordinates $(L, l, G, g)$. Let $K$ be the energy of the planar Kepler problem. When $K$ is negative, we know that all its orbits are closed. Consider two commuting SO(2)-actions on the phase space, one by shifting the phase along the elliptic orbits, and another one by rotating the orbits in the plane. 

\begin{claim} $G$ is the moment map associated to the Hamiltonian action of the group SO(2) acting by simultaneous rotations in positions and in momenta on the phase space. An SO(2)-orbit is parametrized by the argument of the perihelion $g$ (when this angle is well-defined).
\begin{proof} This is a standard calculation of a SO(2)-moment map.
\end{proof}
\end{claim}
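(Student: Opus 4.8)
The plan is to establish the two assertions separately, each by a short direct computation. For the first, I would realise the planar Kepler phase space as $T^{*}(\R^{2} \setminus \{0\})$ with coordinates $(q,p) = (q^{1}, q^{2}, p^{1}, p^{2})$ and symplectic form $\omega_{0} = dp^{1} \wedge dq^{1} + dp^{2} \wedge dq^{2}$, and write the $\mathrm{SO}(2)$-action by simultaneous rotations as $R_{\theta} \cdot (q,p) = (R_{\theta} q, R_{\theta} p)$. Its infinitesimal generator is the vector field $X = -q^{2}\partial_{q^{1}} + q^{1}\partial_{q^{2}} - p^{2}\partial_{p^{1}} + p^{1}\partial_{p^{2}}$, i.e.\ the image of both $q$ and $p$ under the standard generator of $\mathfrak{so}(2)$. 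Contracting, $\iota_{X}\omega_{0}$ is exact and equals $\pm\, d(q^{1}p^{2} - q^{2}p^{1})$; hence the action is Hamiltonian with globally defined moment map the scalar angular momentum $q^{1}p^{2}-q^{2}p^{1}$ (the vertical component of $q \times p$), up to the sign fixed by the conventions adopted earlier in the paper. Comparing with the Delaunay normalisation, this is precisely $G = L\sqrt{1-e^{2}}$. This is the routine ``standard calculation'' and I expect no obstacle here beyond bookkeeping of signs.

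For the second assertion I would restrict to the dense open set of negative Kepler energy on which the orbit is a genuine non-circular, non-rectilinear ellipse --- precisely the locus where $g$ is defined. The key point is that the rotation action sends Keplerian ellipses to Keplerian ellipses while preserving the Kepler energy (hence the semi-major axis $a$ and $L$), the angular momentum (hence the eccentricity $e$ and $G$), and the position of the particle along the orbit relative to the pericentre (hence the mean anomaly $l$); on the other hand it rotates the Laplace--Runge--Lenz vector, which points toward the pericentre, by the same angle. Therefore in Delaunay coordinates the action reads $(L,l,G,g) \mapsto (L,l,G,g+\theta)$, so that an $\mathrm{SO}(2)$-orbit is the circle swept out by $g$ and is faithfully parametrised by it.

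The only points needing a little care are that the rotation action manifestly preserves the punctured configuration space, so it is genuinely defined on the phase space and on the relevant non-degenerate negative-energy stratum, and that $g$ furnishes the appropriate global angle there; granting these, both statements reduce to the two computations above. The only real ``obstacle'' is notational: fixing the orientation conventions for $\mathrm{SO}(2)$ and for the moment map so that one obtains $+G$ rather than $-G$, and so that the action increments $g$ with the sign consistent with the conventions used for the Delaunay variables.
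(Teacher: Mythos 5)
Your proposal is correct and is precisely the ``standard calculation of an SO(2)-moment map'' that the paper invokes without spelling out: computing $\iota_X\omega_0 = d(q^1p^2 - q^2p^1)$ to identify the moment map with the angular momentum $G$, and observing that the rotation fixes $L,l,G$ while shifting the pericentre direction (the Laplace--Runge--Lenz vector), hence $g$, by the rotation angle. You have simply filled in the details the author left to the reader, with appropriate attention to sign conventions.
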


\begin{claim} $L$ is the moment map associated to the Hamiltonian action of the group SO(2) on the phase space by phase shifts on the Keplerian elliptic orbits. An SO(2)-orbits is parametrized by the mean anomaly $l$.
\begin{proof} The second and third laws of Kepler implies that the moment map associated to this SO(2)-action is independent of the eccentricity of the elliptic orbit. It is thus enough to calculate this moment map along orbits with zero eccentricity, \emph{i.e.}, the circular orbits, along which the $SO(2)$-action is just simultaneous rotations in positions and momenta, and the moment map is easily seen to be the (circular) angular momentum $L$. 
\end{proof}
\end{claim}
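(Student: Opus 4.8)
The plan is to realize the phase-shift $\mathrm{SO}(2)$-action concretely as a time-reparametrization of the Keplerian flow, and then to read off its moment map. Write $X_K$ for the Hamiltonian vector field of the planar Kepler Hamiltonian $K$, restricted to the open set $\{K<0\}$, where all orbits are closed (Kepler's first law). The period of the flow of $X_K$ along an orbit is $2\pi/n$, where the mean motion $n$ depends, by Kepler's third law, only on the semi-major axis $a$, hence only on the conserved value of $K$. Therefore the $2\pi$-periodic phase-shift flow is generated by $X:=n(K)^{-1}X_K$; this is a genuine Hamiltonian vector field on all of $\{K<0\}$ precisely because the reparametrization factor $n(K)^{-1}$ is constant along each orbit. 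Its moment map $\Psi$ then satisfies $d\Psi=n(K)^{-1}\,dK$, so $\Psi$ is a function of $K$ alone; in particular it is independent of the eccentricity and of the argument of pericentre, which is the ``independence of the eccentricity'' asserted in the claim.

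Next I would compute $\Psi$ as a function of the energy. With the mass constants $\mu,M$ as in the definition of the Delaunay variables one has $K=-\mu M/(2a)$ and $n=\sqrt{M}\,a^{-3/2}$, so a one-line integration of $\Psi'(K)=n(K)^{-1}$ gives $\Psi=c\sqrt{a}+\mathrm{const}$ with $c=\mu\sqrt{M}$. To fix the additive and multiplicative constants, restrict to the locus of circular orbits $\{e=0\}$: there, advancing the phase of an orbit by a parameter $\theta$ is literally the same as rotating position and momentum simultaneously by the angle $\theta$, so on this locus $\Psi$ coincides with the moment map of the rotation action, which by the first Claim is the angular momentum $G$; and $G=L$ when $e=0$. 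Since $\Psi$ depends only on $a$ and the circular locus meets every value of $a$, we conclude $\Psi=L$ wherever the Delaunay variables are defined.

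For the angle variable, observe that since $X_L=n(K)^{-1}X_K$ and the mean anomaly grows at the constant rate $n$ along the Keplerian flow of $X_K$, the mean anomaly $l$ grows at unit rate along the flow of $X_L$. Hence the $\mathrm{SO}(2)$-orbits of the phase-shift action are exactly the level sets of the remaining variables, parametrized by $l\in\R/2\pi\Z$, and $\{L,l\}=1$, as claimed.

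The delicate point is the first paragraph: justifying that the $2\pi$-periodic phase-shift action is \emph{globally} the reparametrized flow $n(K)^{-1}X_K$ --- this is exactly where Kepler's third law enters, through the fact that $n$ is a function of the energy and hence the reparametrization factor is constant on orbits --- together with the verification on the circular locus that the normalizing constant equals $1$. Everything else is an elementary integration and the remark that on circular orbits the phase shift is a spatial rotation.
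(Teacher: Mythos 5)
Your proposal is correct and follows essentially the same route as the paper: Kepler's third law makes the phase-shift generator a Hamiltonian vector field whose Hamiltonian depends only on the energy, hence is independent of the eccentricity, and its value is then pinned down on the circular locus, where the phase shift coincides with the rotation action whose moment map is the angular momentum. You merely make explicit the reparametrization $X=n(K)^{-1}X_K$ and the integration $d\Psi=n(K)^{-1}\,dK$ that the paper's two-line argument leaves implicit.
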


It is direct to verify that $(L, l, G, g)$ are functionally independent. Moreover, similarly as in \cite{Fejoz2013}, in terms of the Poisson brackets, we have
\begin{itemize}
\item $\{L, l\}=\{G, g\}=1$, by definition of the moment map.
\item $\{L, g\}=\{G, l\}=0$, by definition of the moment map and the commutativity of the two SO(2)-actions.
\item $\{L, G\}=0$, by the fact that $G$ is a first integral for the Kepler problem.
\item $\{l, g\}=0$, as a result of the first three Poisson brackets, the symplectic form may only be written in the form $d L \wedge d l + d G \wedge d g + f \, d l \wedge d g$. by closedness of this 2-form, $f=f(l, g)$ depends only on $l, g$. as the SO(2)-action of the angle $l$ is Hamiltonian, the 1-form $d L + f d g$ must be exact, which implies $f=f(g)$ only depends on $g$. Let $f(g) d g= d F(g)$, then $L+F(g)$ is a moment map associated to $l$. As two SO(2)-moment maps may only differ by a constant, $F(g)$ must be a constant, which in turn implies that $f=0$.
\end{itemize}

\subsubsection{Spatial Delaunay coordinates}
Based on the symplecticity of the planar Delaunay coordinates, a direct application of Theorem \ref{Theo: Generalized Rotation Lemma} confirms the symplecticity of the spatial Delaunay coordinates up to an indetermined factor $D=D(G)$. To determine $D$, we go through a limiting procedure by letting the orbital plane tends to horizontal. Some care must be taken since  the angles $g$ and $h$ in the Delaunay variables are not well-defined for horizontal ellipses. We thus restrict to the submanifold for which all the spatial Delaunay variables are well defined and such that $g=0$, i.e. the direction of the perihelion of the ellipse agrees with the direction of the ascending node. The 2-form $d L \wedge d l + d G \wedge d g + D d H \wedge d h$ is thus restricted to $d L \wedge d l + D d H \wedge d h$ on this submanifold. Thanks to the restriction, the angle $h$ is now exactly the angle between the direction of the perihelion and the first coordinate axis, which remain well-defined when the orbital plane is horizontal. Thus the form $d L \wedge d l + D d H \wedge d h$ can be extended continuously (and actually smoothly) to horizontal orbital plane after the restriction. However, for horizontal ellipse, we have $H=G$, and the angle $h$ agrees with the planar argument of the perihelion (the angle $g$ in the planar Delaunay coordinates). By comparing with the planar Delaunay coordinates, we find $D=1$.

\subsection{Deprit coordinates for the three-body problem}
It is not hard to deduce from Jacobi's elimination of node that a set of Darboux coordinates on the partially reduced space is $(L_{1}, l_{1}, G_{1}, \bar{g}_{1}, L_{2}, l_{2}, G_{2}, \bar{g}_{2}, C, \phi_{1})$. Therefore, from Theorem \ref{Theo: Generalized Rotation Lemma} we know that the symplectic form on a dense open set of $\Pi$ takes the form
$d L_{1} \wedge d l_{1} + d G_{1} \wedge d \bar{g}_{1}+d L_{2} \wedge d l_{2} + d G_{2} \wedge d \bar{g}_{2}+ d C \wedge d \phi_{1} + D\, d C_{z} \wedge d \phi_{2}$. 
Now let us determine the factor $D=D(C)$ by some limiting procedures. We see that the term $D\, d C_{z} \wedge d \phi_{2}$ does not depend specifically on $L_{1}$. 

To determine the constant $D$, we restrict the form $d L_{1} \wedge d l_{1} + d G_{1} \wedge d \bar{g}_{1}+d L_{2} \wedge d l_{2} + d G_{2} \wedge d \bar{g}_{2}+ d C \wedge d \phi_{1} + D d C_{z} \wedge d \phi_{2}$ to the symplectic subspace of identical Keplerian motions, in which we have $L_{1}=L_{2}, l_{1}=l_{2}, G_{1}=G_{2}=\dfrac{C}{2}, \bar{g}_{1}+\phi_{1}=\bar{g}_{2}+\phi_{2}=g_{1}=g_{2}, H_{1}=H_{2}=\dfrac{C_{z}}{2}, \phi_{2}=h_{1}=h_{2}$. By comparing with the restriction of the (decoupled) Delaunay coordinates, we find $D=1$.

\subsection{Deprit coordinates for the $N$-Body problem}
The proof is inductive by taking the Deprit coordinates for the first $N-2$ Keplerian ellipses and Delaunay coordinates for the last Keplerian ellipse. A partial reduction procedure again gives symplectic coordinates on the invariant subspace obtained by fixing the direction of the total angular momentum. Theorem \ref{Theo: Generalized Rotation Lemma} thus confirms the desired result except for the determination of the constant $D$. We can now take the first $2$ Keplerian elliptic motions as identical (therefore we can consider one (fictitious) Keplerian elliptic motion with twice of the circular angular momentum and the angular momentum instead of them) and finish the argument by comparing the resulting coordinates with the Deprit coordinates for the first $N-2$ Keplerian ellipses. We find $D=1$. 

\begin{ack} Many thanks to Alain Chenciner and Jacques Féjoz for their constant help during years, and to them, together with Henk Broer, for comments and suggestions.
\end{ack}

\bibliography{QuasiperiodicMotionSpatial}

\end{document}